\documentclass[mn,fleqn]{w-art}
\usepackage{times}
\usepackage{w-thm}


\usepackage{amsmath, amsthm, amscd, amsfonts, amssymb, graphicx, color}
\usepackage{mathrsfs}

\usepackage[OT2,OT1]{fontenc}
\newcommand\cyr
{
\renewcommand\rmdefault{wncyr}
\renewcommand\sfdefault{wncyss}
\renewcommand\encodingdefault{OT2}
\normalfont
\selectfont
}
\DeclareTextFontCommand{\textcyr}{\cyr}


\usepackage[]{graphicx}
\begin{document}
\DOIsuffix{mana.DOIsuffix}
\Volume{248}
\Month{01}
\Year{2005}
\pagespan{1}{}
\Receiveddate{15 November 2005}
\Reviseddate{30 November 2005}
\Accepteddate{2 December 2005}
\Dateposted{3 December 2005}
\keywords{Arakelov theory, Cuntz-Pimsner algebras.}
\subjclass[msc2010]{ 11M55, 14G40, 46L85.}



\title[Arakelov geometry of Cuntz-Pimsner algebras]{Arakelov geometry of Cuntz-Pimsner algebras}

\author[I. V. Nikolaev]{Igor V. Nikolaev\footnote{Corresponding
     author: e-mail: {\sf igor.v.nikolaev@gmail.com}}\inst{1}} 
     \address[\inst{1}]{Department of Mathematics and Computer Science, St.~John's University, 8000 Utopia Parkway,  
                                 New York,  NY 11439, United States.}

\dedicatory{All data are available as part of the manuscript}
\begin{abstract}
We use $K$-theory of the $C^*$-algebras to study 
the Arakelov geometry, i.e.  a compactification of
the arithmetic schemes $V\to \operatorname{Spec} ~\mathbf{Z}$.  
In particular, it is proved that the Picard group of $V$
is isomorphic to the   $K_0$-group of a Cuntz-Pimsner algebra associated to $V$. 
We apply this result to the finiteness problem for the algebraic varieties over  number 
fields. 
\end{abstract}

\maketitle                   


\section{Introduction}
\subsection{Arakelov geometry}
The Arakelov theory deals  with
a compactification problem for the arithmetic schemes. 
We refer the reader to the original work \cite{Ara1}  and  
 \cite{S} for an introduction. 
To outline the idea, consider an arithmetic variety $V$ given by a system of 
the polynomial equations
\begin{equation}\label{eq1.1}
f_1(x_1,\dots,x_n)=f_2(x_1,\dots,x_n)=\dots=f_k(x_1,\dots,x_n)=0,
\end{equation}
where $f_1,\dots,f_k\in\mathbf{Z}[x_1,\dots,x_n]$ are homogeneous 
polynomials with integer coefficients. 
If  (\ref{eq1.1}) has a solution in  $\mathbf{Z}$,
one can always reduce it modulo prime $p$ 
thus obtaining a solution  in the finite
field $\mathbf{F}_p$.  This phenomenon has a remarkable 
description in terms of  commutative algebra 
and Grothendieck's theory of schemes. 
Namely, one can think of (\ref{eq1.1}) as a 
flat fiber bundle  $V\to\operatorname{Spec}~\mathbf{Z}$
with the  fibers $V(\mathbf{F}_p)$.    
The well known analogy between function fields and number fields 
says that $\operatorname{Spec}~\mathbf{Z}$  can be complemented by  a prime ideal at 
the point $p=\infty$. 
It seems to be difficult to define an ideal satisfying these conditions.
The problem was settled by Arakelov  when  $k=1$ and $n=2$, 
but his method also works   for all  $V\to\operatorname{Spec}~\mathbf{Z}$
of relative dimension $1$.
Specifically, it was proved that  a Riemann surface $X(\mathbf{C})$ endowed with a hermitian 
metric on the holomorphic vector bundle over  $X(\mathbf{C})$ 
corresponds to a fiber at infinity  \cite{Ara1}.
At the heart of Arakelov's construction is the notion of a 
compactified principal divisor:
\begin{equation}\label{eq1.2}
(\varphi)_c=(\varphi)+  v_{\infty}(\varphi) X(\mathbf{C}),
\end{equation}
where $\varphi$ is a rational function on $V$, $(\varphi)$ is the
principal divisor of $\varphi$ and  $v_{\infty}(\varphi)=-\int_{X(\mathbf{C})}\log |\varphi| ~d\mu$
so that  the measure $\mu$ is defined by a Hermitian metric on the holomorphic vector bundle over  
$X(\mathbf{C})$. 
In particular, one can define a compactified Picard group $\operatorname{Pic}_c(V)$ by taking the quotient of the 
abelian group of formal compactified divisors by its subgroup of principal divisors (\ref{eq1.2}). 
 Arakelov's theory has been generalized  to the 
higher dimensions in  \cite{S}.

\subsection{Cuntz-Pimsner algebras}
 The Cuntz-Krieger algebra $\mathcal{O}_A$ is a  $C^*$-algebra
generated by the  partial isometries $s_1,\dots, s_n$ which satisfy  the relations
$s_i^*s_i=\sum_{j=1}^n a_{ij}s_js_j^*$,
where $A=(a_{ij})$ is a square matrix with the integer entries  $a_{ij}\in \{0, 1, 2, \dots \}$
\cite{CunKrie1}.  
Such algebras  appear naturally in the study of local factors \cite{Nik1}. 
The Cuntz-Pimsner algebra is a generalization
of  $\mathcal{O}_A$ to the countably infinite matrices $A_{\infty}\in\operatorname{GL}_{\infty}(\mathbf{Z})$  
with the integer entries
\cite{PasRae1}.
Recall that the matrix $A_{\infty}$ is called row-finite,  if for each $i\in\mathbf{N}$
the number of $j\in\mathbf{N}$ with $a_{ij}\ne 0$ is finite.  The matrix $A$ is 
said to be irreducible, if some power of $A$ is a strictly positive matrix and $A$ is not a
permutation matrix; we refer the reader to Section 2.2 for the details and 
\cite[Section 2]{PasRae1} for a set of examples. 
 It is known that if  $A_{\infty}$ is row-finite and irreducible, then the 
Cuntz-Pimsner algebra 
 $\mathcal{O}_{A_{\infty}}$ is a well-defined  and simple 
\cite[Theorem 1]{PasRae1}.
The $K$-theory of the $C^*$-algebra  $\mathcal{O}_{A_{\infty}}$ is calculated  by Theorem \ref{thm2.2},
see also  \cite[Theorem 3]{PasRae1}.

\subsection{Local factors at infinity}
We recall  an interplay between the Cuntz-Pimsner algebras and
the local factors at infinity \cite{Ser1}.  Let  $char ~A_{\infty}:=\det (A_{\infty}-sI)$ be  the characteristic polynomial of $A_{\infty}\in\operatorname{GL}_{\infty}(\mathbf{Z})$,
where $\det$ is a determinant introduced in  \cite[Section 1]{Den1}.
 The  polynomial $char ~A_{\infty}$ of variable $s$ is well defined by the row-finiteness of matrix $A_{\infty}$  
 \cite[Note 2.1.11]{PasRae1}.
 For every smooth  $n$-dimensional 
projective variety $V$ over a number field $k$
there exist  the Cuntz-Pimsner algebras $\mathcal{O}_{A^i_{\infty}}$,  
such that the Hasse-Weil zeta function of $V$  \cite{Ser1}
is given by the formula \cite[Theorem 1.1]{Nik1}:
\begin{equation}\label{eq1.4}
Z_V(s)=\prod_{i=0}^{2n} \left( char ~A_{\infty}^i \right)^{(-1)^{i+1}}.
\end{equation}

\subsection{Main result}
The aim of our note is an isomorphism between the $K_0$-group of the Cuntz-Pimsner 
algebra $\mathcal{O}_{A^1_{\infty}}$ and the  group $\operatorname{Pic}_c(V)$  (Theorem \ref{thm1.1}).  Such a result is applied
 to the finiteness problem for the algebraic varieties over  number 
fields (Conjecture \ref{cnj4.1}). 
Namely,  let $\mathcal{O}_{A^1_{\infty}}$  be the Cuntz-Pimsner 
algebra defined  by matrix $A^1_{\infty}\in\operatorname{GL}_{\infty}(\mathbf{Z})$ which satisfies equation (\ref{eq1.4}) 
for a variety $V$. Our main result can be 
formulated as follows. 
\begin{theorem}\label{thm1.1}
There exists a canonical isomorphism of the abelian groups 
\linebreak
$K_0(\mathcal{O}_{A^1_{\infty}})\cong\operatorname{Pic}_c(V)$, 
where $\operatorname{Pic}_c(V)$ is Arakelov's compactification of the 
Picard group  of variety $V$. 
\end{theorem}
The paper is organized as follows.  A brief review of the preliminary facts is 
given in Section 2. Theorem \ref{thm1.1}
proved in Section 3.  
An application of Theorem \ref{thm1.1} is discussed 
in Section 4.

\section{Preliminaries}
We briefly review Arakelov theory,  Cuntz-Pimsner algebras  and local factors at infinity. 
We refer the reader to    \cite{Ara1},  \cite{Nik1},     
\cite{PasRae1}
and    \cite{Ser1} for a detailed exposition.

\subsection{Arakelov geometry}
Let $k$ be a number field and let $\Lambda\subset k$ be its ring of algebraic integers. 
Denote by $m=\deg~(k|\mathbf{Q})$ the degree of the field $k$ over the rationals $\mathbf{Q}$.
Let $X$ be an algebraic variety over $k$  and $V\to\operatorname{Spec}~\Lambda$ the corresponding 
arithmetic scheme. 
A finite divisor is the usual divisor of $V$, i.e. $D_{fin}=\sum k_iC_i$,
where $k_i\in\mathbf{Z}$ and $C_i\subset V$ the irreducible closed subsets of $V$
of codimension $1$. 
By $X_{\infty}$ one understands an analog of fiber of the bundle $V\to\operatorname{Spec}~\Lambda$
over the infinite point of $Spec ~\Lambda$.  
Along with the finite divisors, one considers divisors of the form:
\begin{equation}\label{eq2.1}
D=D_{fin}+\sum_{i=1}^m \lambda_{\infty}^{(i)} X_{\infty}^{(i)},
\end{equation}
where $\lambda_{\infty}^{(i)}\in\mathbf{R}$ and summation is taken over all embeddings of the field $k$. 
To define a (compactified) principal divisor, it is necessary to fix on each $X_{\infty}^{(i)}$ a hermitian metric,
such that the corresponding volume elements satisfy the condition $\int_{X_{\infty}^{(i)}}d\mu=1$.  
(The construction can be generalized to the higher dimensions using hermitian metric on holomorphic 
vector bundle over $X_{\infty}^{(i)}$ which are required to be invariant under the complex conjugation.)
Let $\varphi$ be a rational function on $V$ and $(\varphi)$ be the principal divisor 
corresponding to $\varphi$. One defines  $v_{\infty}(\varphi)=-\int_{X_{\infty}^{(i)}}\log |\varphi|d\mu$.
The compactified principal divisor of $\varphi$  is given by the formula:
\begin{equation}\label{eq2.2}
(\varphi)_c=(\varphi)+\sum_{i=1}^m v_{\infty}(\varphi)X_{\infty}^{(i)}. 
\end{equation}
Divisors (\ref{eq2.1}) and principal divisors (\ref{eq2.2}) make an additive abelian group $\operatorname{Div}(V)$ and $P(V)$, respectively. 
Since $P(V)\subset \operatorname{Div}(V)$, one can introduce a compactified Picard group $\operatorname{Pic}_c(V)=\operatorname{Div}(V)/P(V)$. 
On the other hand, the well known product formula of  valuation theory gives rise to the Picard group 
$\operatorname{Pic}_c(k)$ of the number field $k$.
Roughly speaking, the following result says that the Arakelov geometry is a correct compactification of the arithmetic schemes.
\begin{theorem} \label{thm2.1}
{\bf (\cite[Section 0]{Ara1})}
Arakelov completion of the fiber map $V\to\operatorname{Spec}~\Lambda$  induces a homomorphism  of the abelian  groups $\operatorname{\operatorname{Pic}}_c(k)\to \operatorname{Pic}_c(V)$. 
\end{theorem}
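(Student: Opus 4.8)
The plan is to realize the homomorphism as pullback of compactified divisors along the structure morphism $\pi\colon V\to Spec~\Lambda$, and then to verify that this pullback respects principal divisors so that it descends to the Picard groups. First I would define $\pi^*$ on Arakelov divisors. On the finite part, a prime $\mathfrak{p}\subset\Lambda$ is sent to its scheme-theoretic fiber $\pi^{-1}(\mathfrak{p})$; by flatness of $\pi$ this is a codimension-one cycle on $V$, hence a finite divisor $D_{fin}$ in the sense of (\ref{eq2.1}), and extending $\mathbf{Z}$-linearly gives a map on finite divisors. On the infinite part, the archimedean place determined by the $i$-th embedding of $k$ is sent to the corresponding fiber $X_{\infty}^{(i)}$, matching the summands of (\ref{eq2.1}). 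Both assignments are additive, so $\pi^*\colon Div(k)\to Div(V)$ is a homomorphism of abelian groups essentially by construction.

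The content of the argument is to show $\pi^*\big(P(k)\big)\subseteq P(V)$. Fix $\alpha\in k^*$ and let $\varphi=\pi^*\alpha$ be its pullback, a rational function on $V$ that is constant along the fibers of $\pi$. On the finite part this is the standard compatibility of principal divisors under a flat morphism: the finite part of the divisor of $\alpha$ on $Spec~\Lambda$ pulls back to the finite part of $(\varphi)$. The delicate point, and the one I expect to be the main obstacle, is the archimedean contribution: one must check that Arakelov's integral valuation $v_{\infty}(\varphi)=-\int_{X_{\infty}^{(i)}}\log|\varphi|\,d\mu$ from (\ref{eq2.2}) reproduces the ordinary archimedean term $-\log|\sigma_i(\alpha)|$ appearing in the product formula that defines $Pic_c(k)$, where $\sigma_i$ denotes the $i$-th embedding. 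Since $\varphi=\pi^*\alpha$ is constant on each fiber $X_{\infty}^{(i)}$, the integrand $\log|\varphi|$ equals the constant $\log|\sigma_i(\alpha)|$, so $v_{\infty}(\varphi)=-\log|\sigma_i(\alpha)|\int_{X_{\infty}^{(i)}}d\mu$; the normalization $\int_{X_{\infty}^{(i)}}d\mu=1$ imposed in the definition then gives exactly $v_{\infty}(\varphi)=-\log|\sigma_i(\alpha)|$, which is what is needed for the infinite part of $\pi^*(\alpha)$ to agree with the infinite part of $(\varphi)_c$. Some bookkeeping remains to match the real and complex archimedean places of $k$, whose multiplicities sum to $m$, against the $m$ summands in (\ref{eq2.1}), and to confirm the chosen normalization is the one forced by the product formula.

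Once $\pi^*\big(P(k)\big)\subseteq P(V)$ is established, $\pi^*$ descends to the quotients $Pic_c(k)=Div(k)/P(k)\to Div(V)/P(V)=Pic_c(V)$, and the induced map is a homomorphism because $\pi^*$ already is one on divisors. A more conceptual alternative would interpret $Pic_c(k)$ and $Pic_c(V)$ as groups of isometry classes of Hermitian line bundles on the compactified base and on $V$ respectively; then $\pi^*$ is simply the pullback of metrized line bundles, which is manifestly a homomorphism since it respects tensor products and pullback of metrics. In either formulation the crux is the normalization $\int d\mu=1$, which reconciles the analytic valuation at infinity on $V$ with the arithmetic product formula on $k$.
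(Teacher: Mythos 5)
The paper contains no proof of this statement to compare against: Theorem \ref{thm2.1} is quoted as background directly from [Arakelov 1974] \cite[Section 0]{Ara1}, and the author never argues it. Judged on its own merits, your proposal is sound and is in substance Arakelov's construction: pull back each finite prime $\mathfrak{p}\subset\Lambda$ to its vertical fiber $\pi^{-1}(\mathfrak{p})$, send the $i$-th embedding to $X_{\infty}^{(i)}$, and verify on principal divisors that, since $\pi^*\alpha$ is constant equal to $\sigma_i(\alpha)$ on each fiber at infinity, the normalization $\int_{X_{\infty}^{(i)}}d\mu=1$ forces $v_{\infty}(\pi^*\alpha)=-\log|\sigma_i(\alpha)|$, so compactified principal divisors pull back to compactified principal divisors and the map descends to the quotients. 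The two loose ends you flag are genuinely minor: flatness (and properness) of $V\to Spec~\Lambda$ is part of what one builds into the arithmetic model, and it is what makes both the codimension-one claim and the vertical multiplicity count $ord_{C_i}(\pi^*\alpha)=m_i\,v_{\mathfrak{p}}(\alpha)$ come out right; and the real-versus-complex place bookkeeping is absorbed by the paper's convention in (\ref{eq2.1})--(\ref{eq2.2}) of summing over all $m$ embeddings rather than over archimedean places, which is precisely the normalization under which the product formula matches $\sum_{i=1}^m -\log|\sigma_i(\alpha)|$.
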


\subsection{Cuntz-Pimsner algebras}
The Cuntz-Krieger algebra $\mathcal{O}_A$ is a  $C^*$-algebra
generated by the  partial isometries $s_1,\dots, s_n$ which satisfy  the relations
\begin{equation}
\left\{
\begin{array}{ccc}
s_1^*s_1 &=& a_{11} s_1s_1^*+a_{12} s_2s_2^*+\dots+a_{1n}s_ns_n^*\\ 
s_2^*s_2 &=& a_{21} s_1s_1^*+a_{22} s_2s_2^*+\dots+a_{2n}s_ns_n^*\\ 
                  &\dots&\\
s_n^*s_n &=& a_{n1} s_1s_1^*+a_{n2} s_2s_2^*+\dots+a_{nn}s_ns_n^*,             
\end{array}
\right.
\end{equation}
where $A=(a_{ij})$ is a square matrix with  $a_{ij}\in \{0, 1, 2, \dots \}$. 
(Note that the original definition of $\mathcal{O}_A$ says that  $a_{ij}\in \{0, 1\}$
but is known to be extendable to all non-negative integers \cite{CunKrie1}.)   
Such algebras  appear naturally in the study of local factors \cite{Nik1}.

The Cuntz-Pimsner algebra  $\mathcal{O}_{A_{\infty}}$ corresponds to the case of 
 the countably infinite matrices $A_{\infty}\in\operatorname{GL}_{\infty}(\mathbf{Z})$  
\cite{PasRae1}.
The matrix $A_{\infty}$ is called row-finite,  if for each $i\in\mathbf{N}$
the number of $j\in\mathbf{N}$ with $a_{ij}\ne 0$ is finite.  The matrix $A_{\infty}$ is 
said to be irreducible, if some power of $A_{\infty}$ is a strictly positive matrix and $A_{\infty}$ is not a
permutation matrix.  If  $A_{\infty}$ is row-finite and irreducible, then the 
Cuntz-Pimsner algebra  $\mathcal{O}_{A_{\infty}}$ is a well-defined  and simple 
 \cite[Theorem 1]{PasRae1}.

An AF-core $\mathscr{F}\subset \mathcal{O}_{A_{\infty}}$ is an Approximately Finite (AF-) $C^*$-algebra
defined by the closure of  the infinite union $\cup_{k,j} \cup_{i\in V_k^j} \mathscr{F}_k^j(i)$,
where  $\mathscr{F}_k^j(i)$ are finite-dimensional $C^*$-algebras 
built from matrix $A_{\infty}$  \cite[Definition 2.2.1]{PasRae1}. 
Let $\alpha: \mathcal{O}_{A_{\infty}}\to \mathcal{O}_{A_{\infty}}$ be an automorphism 
acting on the generators $s_i$ of $\mathcal{O}_{A_{\infty}}$ by
to the formula $\alpha_z(s_i)=zs_i$, where $z$ is a complex number  $|z|=1$.
One gets an action of the abelian group $\mathbb{T}\cong\mathbf{R}/\mathbf{Z}$ on  $\mathcal{O}_{A_{\infty}}$. 
The Takai duality \cite[p. 432]{PasRae1} says that:
\begin{equation}\label{eq2.4} 
\mathscr{F}\rtimes_{\hat\alpha}\mathbb{T}\cong \mathcal{O}_{A_{\infty}}\otimes\mathcal{K},
\end{equation}
where $\hat\alpha$ is the Takai dual of $\alpha$ and $\mathcal{K}$ is the
$C^*$-algebra of compact operators.  Using (\ref{eq2.4}) one can calculate the the 
$K$-theory of  $\mathcal{O}_{A_{\infty}}$. Namely, the following statement is true. 
\begin{theorem}\label{thm2.2}
{\bf (\cite[Theorem 3]{PasRae1})}
If $A_{\infty}$ is row-finite irreducible matrix, then there exists an exact 
sequence of the abelian groups:
\begin{equation}\label{eq2.5} 
0\to K_1(\mathcal{O}_{A_{\infty}})\to \mathbf{Z}^{\infty}\buildrel 1-A_{\infty}^t\over\longrightarrow 
 \mathbf{Z}^{\infty}\buildrel i_*\over\longrightarrow  K_0(\mathcal{O}_{A_{\infty}})\to 0, 
\end{equation}
so that $K_0(\mathcal{O}_{A_{\infty}})\cong  \mathbf{Z}^{\infty}/(1-A_{\infty}^t) \mathbf{Z}^{\infty}$ and 
 $K_1(\mathcal{O}_{A_{\infty}})\cong Ker ~(1-A_{\infty}^t)$, where  $A_{\infty}^t$ is the transpose 
 of  $A_{\infty}$ and $i: \mathscr{F}\hookrightarrow \mathcal{O}_{A_{\infty}}$.
 Moreover, the Grothendieck semigroup $K_0^+(\mathscr{F})\cong \varinjlim (\mathbf{Z}^{\infty}, A_{\infty}^t)$. 
 \end{theorem}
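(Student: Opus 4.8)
The plan is to compute the $K$-theory of $\mathcal{O}_{A_\infty}$ by realizing it, up to stabilization, as a crossed product of the AF-core $\mathscr{F}$ by an action of $\mathbf{Z}$, and then feeding this into the Pimsner--Voiculescu exact sequence. The gauge action $\alpha$ of $\mathbb{T}$ fixes $\mathscr{F}$ pointwise, and the Takai duality (\ref{eq2.4}) identifies $\mathcal{O}_{A_\infty}\otimes\mathcal{K}$ with the crossed product of $\mathscr{F}$ by the single automorphism $\hat\alpha$ dual to $\alpha$; since $\hat{\mathbb{T}}\cong\mathbf{Z}$, this is a crossed product by $\mathbf{Z}$. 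Because stabilization by $\mathcal{K}$ leaves $K$-theory unchanged, it suffices to compute $K_*(\mathscr{F}\rtimes_{\hat\alpha}\mathbf{Z})$.

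First I would compute $K_*(\mathscr{F})$. Since $\mathscr{F}$ is AF, $K_1(\mathscr{F})=0$ and $K_0(\mathscr{F})$ is the dimension group read off the Bratteli diagram of the defining inductive system $\cup_{k,j}\cup_{i\in V_k^j}\mathscr{F}_k^j(i)$. The multiplicities of the inclusions between consecutive finite-dimensional blocks are exactly the entries of $A_\infty$, so the connecting maps on $K_0$ are given by $A_\infty^t$ and
\[
K_0(\mathscr{F})\cong\varinjlim\left(\mathbf{Z}^\infty\xrightarrow{A_\infty^t}\mathbf{Z}^\infty\xrightarrow{A_\infty^t}\cdots\right),
\]
which is precisely the asserted description of $K_0^+(\mathscr{F})$ as an ordered group. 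Under this identification the automorphism $\hat\alpha_*$ induced on $K_0(\mathscr{F})$ is the shift $\varinjlim A_\infty^t$ of the stationary system.

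Next I would insert these data into the six-term Pimsner--Voiculescu sequence for $\mathscr{F}\rtimes_{\hat\alpha}\mathbf{Z}$. Since $K_1(\mathscr{F})=0$, two of the six terms vanish and the sequence collapses to the short exact sequence
\[
0\to K_1(\mathcal{O}_{A_\infty})\to K_0(\mathscr{F})\xrightarrow{1-\hat\alpha_*}K_0(\mathscr{F})\to K_0(\mathcal{O}_{A_\infty})\to 0.
\]
It then remains to pass from the inductive-limit group $K_0(\mathscr{F})$ to the presentation on $\mathbf{Z}^\infty$ appearing in (\ref{eq2.5}). Here the hypothesis $A_\infty\in GL_\infty(\mathbf{Z})$ is decisive: the connecting maps $A_\infty^t$ are isomorphisms, so the canonical map from the $0$-th copy identifies $\varinjlim(\mathbf{Z}^\infty,A_\infty^t)$ with $\mathbf{Z}^\infty$ and carries $\hat\alpha_*$ to $A_\infty^t$. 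Consequently $1-\hat\alpha_*$ becomes $1-A_\infty^t$, whence $K_0(\mathcal{O}_{A_\infty})\cong\mathbf{Z}^\infty/(1-A_\infty^t)\mathbf{Z}^\infty$ and $K_1(\mathcal{O}_{A_\infty})\cong\mathrm{Ker}\,(1-A_\infty^t)$. (If one prefers not to use invertibility, the same identification follows from the standard telescoping argument for stationary inductive limits, which computes the cokernel and kernel of $1-\varinjlim A_\infty^t$ directly in terms of $1-A_\infty^t$.)

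The hardest part will be the two structural inputs rather than the homological algebra: first, verifying that Takai duality really delivers $\mathcal{O}_{A_\infty}\otimes\mathcal{K}$ as a genuine $\mathbf{Z}$-crossed product over the AF-core with $\hat\alpha$ acting as the shift, and second, the combinatorial bookkeeping that the inclusions among the blocks $\mathscr{F}_k^j(i)$ have multiplicity matrix $A_\infty$, so that $\hat\alpha_*=\varinjlim A_\infty^t$. Both steps are routine in the finite Cuntz--Krieger case but require genuine care for countably infinite matrices, where row-finiteness is exactly what guarantees that the finite-dimensional approximations, the connecting maps, and the resulting inductive limit are all well-defined.
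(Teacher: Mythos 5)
Your proposal is correct and follows essentially the same route as the paper, which states this result as a citation of Pask--Raeburn's Theorem 3 and sketches exactly your argument in Section 2.2: Takai duality (the paper's (\ref{eq2.4})) realizing $\mathcal{O}_{A_{\infty}}\otimes\mathcal{K}$ as a $\mathbf{Z}$-crossed product over the AF-core $\mathscr{F}$, followed by the Pimsner--Voiculescu sequence, which collapses to (\ref{eq2.5}) since $K_1(\mathscr{F})=0$, together with the identification $K_0^+(\mathscr{F})\cong\varinjlim(\mathbf{Z}^{\infty},A_{\infty}^t)$ from the Bratteli data. The only points needing care are the ones you already flag (row-finiteness making the inductive system well-defined, and the multiplicity bookkeeping identifying $\hat\alpha_*$ with $A_{\infty}^t$), and your fallback telescoping argument correctly covers the passage from the limit group to the presentation by $1-A_{\infty}^t$ without leaning on invertibility.
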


\subsection{Local factors at infinity}
Let $V$ be an $n$-dimensional smooth projective variety over a number field $k$
and let $V(\mathbf{F}_q)$ be a good reduction of $V$ modulo the prime ideal 
corresponding to  $q=p^r$.  The local zeta  $Z_q(u):=\exp\left(\sum_{m=1}^{\infty}
|V(\mathbf{F}_q)| \frac{u^m}{m}\right)$ is a rational function
$Z_q(u)=\prod_{i=0}^{2n}\left(P_i(u)\right)^{(-1)^{i+1}}$,
where $P_0(u)=1-u$ and $P_{2n}(u)=1-q^nu$.
Each  $P_i(u)$ is   the characteristic polynomial of the Frobenius 
endomorphism $Fr_q^i : ~(a_1,\dots, a_n)\mapsto (a_1^q,\dots, a_n^q)$ 
acting on  the $i$-th $\ell$-adic cohomology group  $H^i(V)$ of variety $V$.  
 The number of  points on $V(\mathbb{F}_q)$  is given by the Lefschetz trace formula 
$|V(\mathbb{F}_q)|=\sum_{i=0}^{2n}(-1)^i  ~tr~(Fr^i_q)$, 
where  $tr$ is the trace of  endomorphism  $Fr^i_q$, see e.g.   \cite{Ser1}. 
The Hasse-Weil zeta function of $V$ is an infinite product
$Z_V(s)=\prod_p Z_q(p^{-s}),  ~s\in\mathbf{C}$, 
where $p$ runs through all but a finite set of primes. 
A fundamental  analogy between number fields and function 
fields predicts a prime $p=\infty$  in formula $Z_q(u)=\prod_{i=0}^{2n}\left(P_i(u)\right)^{(-1)^{i+1}}$. 
 Serre constructed local factors  $\Gamma_V^i(s)$ realizing  the analogy
in terms of  the  $\Gamma$-functions attached to the Hodge structure on $V$
 \cite{Ser1}.   
 The local factors  $\Gamma_V^i(s)$  can be defined in a way similar to  the finite primes.
 Namely,  Deninger introduced  an infinite-dimensional cohomology $H^i_{ar}(V)$
and  an action of Frobenius endomorphism $Fr_{\infty}^i:   H^i_{ar}(V)\to H^i_{ar}(V)$, 
such that  $\Gamma_V^i(s)\equiv char^{-1}~Fr_{\infty}^i$, where $char ~Fr_{\infty}^i$ is   the characteristic polynomial of $Fr_{\infty}^i$
 \cite[Theorem 4.1]{Den1}.  
The following result relates the local factors at infinity with the Cuntz-Pimsner algebras.
\begin{theorem}\label{thm2.3}
{\bf (\cite[Theorem 1.1]{Nik1})}
For every smooth  $n$-dimensional 
projective variety $V$ over a number field $k$
there exist  the Cuntz-Pimsner algebras $\mathcal{O}_{A^i_{\infty}}$,  
such that the Hasse-Weil zeta function of $V$  
is given by the formula
\begin{equation}\label{eq2.6}
Z_V(s)=\prod_{i=0}^{2n} \left( char ~A_{\infty}^i \right)^{(-1)^{i+1}}.
\end{equation}
\end{theorem}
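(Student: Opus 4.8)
The plan is to attach, to each cohomological degree $i=0,\dots,2n$, a single countably infinite integer matrix $A_\infty^i$ that packages the \emph{entire} family of Frobenius operators acting on the $i$-th cohomology of $V$ -- the finite Frobenii $Fr_q^i$ that produce the Weil polynomials $P_i(u)=\det(1-u\,Fr_q^i\mid H^i(V))$ at every prime, together with Deninger's archimedean Frobenius $Fr_\infty^i$ -- in such a way that its characteristic polynomial $char~A_\infty^i$, read as a function of $s$, is exactly the complete degree-$i$ factor of the Hasse-Weil zeta function. This is what forces the matrix to be infinite: it must carry the local spectral data at all places of $k$ at once. Granting such a matrix, the theorem follows formally, since the local decomposition $Z_q(u)=\prod_{i=0}^{2n}(P_i(u))^{(-1)^{i+1}}$ already has the alternating shape of (\ref{eq2.6}), and the Euler product $Z_V(s)=\prod_p Z_q(p^{-s})$ together with the Serre--Deninger completion regroups factor by factor in the index $i$.

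The starting point is Deninger's description of the place at infinity. By [Deninger 1991]\cite[Theorem 4.1]{Den1} there is an infinite-dimensional cohomology $H^i_{ar}(V)$ equipped with a Frobenius endomorphism $Fr_\infty^i$ satisfying $\Gamma_V^i(s)\equiv char^{-1}~Fr_\infty^i$; in Deninger's picture this space is the arena for a single Frobenius flow whose spectrum simultaneously records the local eigenvalues at every prime. The decisive step is to realize $Fr_\infty^i$ by an integer matrix. I would fix a Frobenius-stable $\mathbf{Z}$-lattice $L^i\subset H^i_{ar}(V)$ and choose a basis of $L^i$ adapted to the filtration by the finite-dimensional layers $\mathscr{F}_k^j(i)$ that build the AF-core, then set $A_\infty^i$ to be the matrix of $Fr_\infty^i$ in this basis. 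Invertibility of the Frobenius gives $A_\infty^i\in GL_\infty(\mathbf{Z})$; the finite propagation of $Fr_\infty^i$ along the filtration makes only finitely many entries in each row nonzero, so $A_\infty^i$ is row-finite; and topological transitivity of the Frobenius dynamics on the geometrically connected $V$ yields a strictly positive power of $A_\infty^i$ that is not a permutation, so $A_\infty^i$ is irreducible.

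Once $A_\infty^i$ is row-finite and irreducible, [Pask \& Raeburn 1996]\cite[Theorem 1]{PasRae1} shows that the Cuntz-Pimsner algebra $\mathcal{O}_{A^i_\infty}$ is well defined and simple, and [Pask \& Raeburn 1996]\cite[Note 2.1.11]{PasRae1} shows that $char~A_\infty^i$ is well defined thanks to row-finiteness. Because the characteristic polynomial is invariant under change of basis, $char~A_\infty^i=char~Fr_\infty^i$, so $(char~A_\infty^i)^{-1}$ coincides with the archimedean factor $\Gamma_V^i(s)$ of Serre while the finite-place data carried by the same matrix recovers the Weil polynomials $P_i(p^{-s})$. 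Substituting these identifications into $Z_V(s)=\prod_p Z_q(p^{-s})$ and completing at infinity then gives $Z_V(s)=\prod_{i=0}^{2n}(char~A_\infty^i)^{(-1)^{i+1}}$, which is (\ref{eq2.6}).

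The main obstacle lies entirely in the second paragraph. Producing one matrix over $\mathbf{Z}$ that simultaneously encodes the finite and the archimedean Frobenii requires exhibiting a Frobenius-stable integral lattice inside Deninger's a priori real (or complex) cohomology and a basis in which the operator has finite propagation; neither is automatic and both depend on having a sufficiently explicit model of $H^i_{ar}(V)$. Equally delicate is the irreducibility: checking that some power of $A_\infty^i$ is strictly positive is a mixing statement for the arithmetic Frobenius flow, and it is precisely here that the smoothness and geometric connectedness of $V$ must enter in an essential way.
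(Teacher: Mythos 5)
The paper itself contains no proof of this statement: Theorem \ref{thm2.3} is imported verbatim from \cite[Theorem 1.1]{Nik1}, so there is no internal argument to compare against, and your proposal must stand on its own. It does not, and the problem is not merely the incompleteness you concede in your last paragraph --- the second paragraph is internally inconsistent. You define $A_{\infty}^i$ to be the matrix of Deninger's $Fr_{\infty}^i$ in a basis of a Frobenius-stable lattice $L^i\subset H^i_{ar}(V)$, and then invoke basis-invariance of the characteristic polynomial to get $char~A_{\infty}^i=char~Fr_{\infty}^i$. But by \cite[Theorem 4.1]{Den1}, as recalled in Section 2.3, $char~Fr_{\infty}^i$ is exactly $\left(\Gamma_V^i(s)\right)^{-1}$, i.e.\ only the archimedean factor. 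Formula (\ref{eq2.6}), however, contains no Euler product over finite primes: each $char~A_{\infty}^i$ must by itself equal the \emph{complete} degree-$i$ factor, the Gamma factor times $\prod_p P_i(p^{-s})$. A matrix that is literally a matrix representation of $Fr_{\infty}^i$ carries no information beyond its conjugacy class, so the very basis-invariance you invoke contradicts your sentence claiming that ``the finite-place data carried by the same matrix recovers the Weil polynomials.'' Either $A_{\infty}^i$ is not a matrix model of $Fr_{\infty}^i$ alone (and then the actual construction --- the entire content of \cite[Theorem 1.1]{Nik1} --- is absent), or it is, and then (\ref{eq2.6}) reduces to a statement about Gamma factors only and is false as an identity for $Z_V(s)$.

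Several supporting steps are also asserted rather than proved, and one is circular. The existence of an $Fr_{\infty}^i$-stable $\mathbf{Z}$-lattice inside Deninger's a priori real or complex infinite-dimensional cohomology is nontrivial and nowhere established; note that in the present paper the integrality comes from the identification $H^1_{ar}(V)\cong\mathbf{Z}[\mathbf{x}^{\pm 1}]$ used in Lemma \ref{lm3.1}, not from a lattice chosen inside a real space. Your row-finiteness argument selects a basis ``adapted to the filtration by the finite-dimensional layers $\mathscr{F}_k^j(i)$,'' but those layers are, per Section 2.2, built \emph{from} the matrix $A_{\infty}$; using them to construct $A_{\infty}^i$ presupposes the object you are defining. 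Irreducibility via ``topological transitivity of the Frobenius dynamics'' is a heuristic with no argument behind it, and it is needed in full force, since \cite[Theorem 1]{PasRae1} and \cite[Theorem 3]{PasRae1} require row-finiteness and irreducibility as hypotheses. Finally, since $\Gamma_V^i(s)$ is a product of Gamma functions rather than a polynomial, $char$ here must mean a regularized characteristic function, and its invariance under infinite changes of basis is not supplied by \cite[Note 2.1.11]{PasRae1}, which only defines $char$ for a fixed row-finite matrix. In short, your reduction of (\ref{eq2.6}) to the construction of $A_{\infty}^i$ correctly locates where the content lies, but everything at that location is missing or inconsistent.
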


\section{Proof}
For the sake of clarity, let us outline the main ideas. 
Let $Fr^1_{\infty}$ be the Frobenius endomorphism of the 
Deninger cohomology group $H^1_{ar}(V)$. Recall that 
 $H^1_{ar}(V)\cong\mathbf{Z}[\mathbf{x}^{\pm 1}]$ is the 
 ring of Laurent polynomials in variables $\mathbf{x}=(x_1,\dots,x_{b_1})$,
 where $b_1$ is the first Betti number of variety $V$  \cite[Section 3]{Den1}. 
 The commutative ring  $A:=Fr^1_{\infty}(\mathbf{Z}[\mathbf{x}^{\pm 1}])$ is 
 the coordinate ring of a variety $V$  (Lemma \ref{lm3.1}). 
  One can consider a divisor class group $C(A)$ of  the ring $A$
 as the quotient group of divisorial ideals $D(A)$ by the subgroup $F(A)$ 
 consisting of the principal ideals. Since $A$ is a locally factorial ring, the group $C(A)$ is isomorphic to the    
 Picard group of variety $V$ (Lemma \ref{lm3.2}). 
 The rest of the proof follows from Theorem \ref{thm2.2} 
 and  the well known formula  $C(A)\cong  \mathbf{Z}^{\infty}/(1-(Fr^1_{\infty})^t)\mathbf{Z}^{\infty}$
 (Lemma \ref{lm3.3}).
 Let us pass to a detailed argument.

\begin{lemma}\label{lm3.1}
The commutative ring  $A:=Fr^1_{\infty}(\mathbf{Z}[\mathbf{x}^{\pm 1}])$ is 
 the coordinate ring of a projective variety $V$.
 \end{lemma}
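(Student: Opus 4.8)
The plan is to identify the ring $A$ with the coordinate ring of the arithmetic scheme $V$ and then to appeal to the anti-equivalence between affine schemes and commutative rings. First I would unwind Deninger's construction \cite[Section 3]{Den1}: the archimedean cohomology carries a canonical ring isomorphism $H^1_{ar}(V)\cong\mathbf{Z}[\mathbf{x}^{\pm 1}]$ with $\mathbf{x}=(x_1,\dots,x_{b_1})$, and this identification is functorial in $V$. The Frobenius endomorphism $Fr^1_\infty$ is the object carrying the arithmetic of $V$, since by Theorem \ref{thm2.3} its characteristic polynomial controls the first local factor of the Hasse--Weil zeta function.

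The key step is to compute the image $A=Fr^1_\infty(\mathbf{Z}[\mathbf{x}^{\pm 1}])$ as an abstract ring. Writing the endomorphism as a surjection onto its image, one has $A\cong\mathbf{Z}[\mathbf{x}^{\pm 1}]/\ker(Fr^1_\infty)$, so everything hinges on the kernel. I would argue that $\ker(Fr^1_\infty)$ is exactly the ideal cutting out $V$, so that $A\cong\mathbf{Z}[\mathbf{x}^{\pm 1}]/\ker(Fr^1_\infty)$ is the coordinate ring of $V$ in the usual sense; concretely, the relations satisfied by the images $Fr^1_\infty(x_i)$ should reproduce the defining equations of $V$.

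Granting this identification, the conclusion is immediate from the classical anti-equivalence between affine schemes and commutative rings: an isomorphism $A\cong A'$ induces an isomorphism of spectra $\mathrm{Spec}~A\cong\mathrm{Spec}~A'$, and since $V\cong\mathrm{Spec}~A$ and $V'\cong\mathrm{Spec}~A'$, we obtain $V\cong V'$.

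I expect the main obstacle to be the computation of $\ker(Fr^1_\infty)$, which is genuinely the heart of the matter. If $Fr^1_\infty$ were injective, then $A$ would be abstractly isomorphic to the full Laurent ring $\mathbf{Z}[\mathbf{x}^{\pm 1}]$ and would remember nothing beyond the Betti number $b_1$, so that the lemma would be false. The content of the statement is therefore precisely that Deninger's Frobenius is arranged so that its kernel encodes the defining equations of $V$; verifying this requires the specific structure of the archimedean cohomology \cite[Section 3]{Den1} and cannot follow from abstract ring theory alone.
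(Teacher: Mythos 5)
Your plan stands or falls on the claim that $\ker(Fr^1_\infty)$ is the defining ideal of $V$, and you correctly identify this as the heart of the matter---but in the framework of this paper that step is not merely unproven, it fails outright. Two reasons. First, $Fr^1_\infty$ is not a ring homomorphism here: it is an infinite integer matrix acting on the additive group $\mathbf{Z}^\infty$ of the Laurent ring (this is exactly why the paper can speak of its transpose and characteristic polynomial and can substitute $A_\infty=Fr^1_\infty$ into Theorem \ref{thm2.2}), so the first isomorphism theorem gives at best an isomorphism of abelian groups, not a presentation of $A$ as a quotient ring $\mathbf{Z}[\mathbf{x}^{\pm 1}]/\ker(Fr^1_\infty)$. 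Second, and decisively: for the Cuntz--Pimsner algebra $\mathcal{O}_{A^1_\infty}$ to be defined and simple, the matrix $A^1_\infty=Fr^1_\infty$ is taken in $GL_\infty(\mathbf{Z})$, row-finite and irreducible; in particular it is injective, so $\ker(Fr^1_\infty)=0$. By your own (correct) reductio, your strategy then yields an $A$ that remembers nothing about $V$ beyond the Betti number $b_1$, i.e.\ the approach, carried to completion, refutes itself. Whatever arithmetic information this construction retains is carried not by a kernel but by how the image $Fr^1_\infty(\mathbf{Z}[\mathbf{x}^{\pm 1}])$ sits inside $\mathbf{Z}[\mathbf{x}^{\pm 1}]$ as a sublattice---this relative position is what Lemma \ref{lm3.3} later exploits through the quotient $\mathbf{Z}^\infty/(1-(Fr^1_\infty)^t)\mathbf{Z}^\infty$.

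The paper's own proof avoids computing $A$ altogether and argues functorially in the opposite direction: given an isomorphism $V\to V'$, the cohomology functor induces an isomorphism $\phi\colon H^1_{ar}(V)\to H^1_{ar}(V')$, i.e.\ $\phi\colon\mathbf{Z}[\mathbf{x}^{\pm 1}]\to\mathbf{Z}[\mathbf{y}^{\pm 1}]$, which is promoted to a ring isomorphism by a choice of monomial bases and commutes with Frobenius, whence $\phi(Fr^1_\infty(\mathbf{Z}[\mathbf{x}^{\pm 1}]))=Fr^1_\infty(\mathbf{Z}[\mathbf{y}^{\pm 1}])$ and $A\cong A'$. Note that this establishes $V\cong V'\Rightarrow A\cong A'$ (well-definedness of $V\mapsto A$), whereas the implication literally stated in the lemma---and the one your $\mathrm{Spec}$ argument targets---is the converse. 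So your instinct about which direction needs proving matches the statement; the gap is that the mechanism you propose for it, a kernel presentation, simply does not exist in this setting.
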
 
\begin{proof}
(i)  Let $b_1$ be the first Betti number of variety $V(\mathbf{C})$. 
Recall that the first Deninger's cohomology group   $H^1_{ar}(V)$
is isomorphic to the additive group of the Laurent polynomials
$\mathbf{Z}[x_1^{\pm 1},\dots, x_{b_1}^{\pm 1}]$  \cite[Section 3]{Den1},
see also Remark \ref{rmkDen}. 
Thus one gets an endomorphism $Fr^1_{\infty}:
\mathbf{Z}[x_1^{\pm 1},\dots, x_{b_1}^{\pm 1}]\to \mathbf{Z}[x_1^{\pm 1},\dots, x_{b_1}^{\pm 1}]$.
Notice that any basis of  $\mathbf{Z}[x_1^{\pm 1},\dots, x_{b_1}^{\pm 1}]$ is countably infinite, and therefore  
 $Fr^1_{\infty}$ is given by an infinite-dimensional integer matrix; hence the notation.

\bigskip
(ii) Let $V\to V'$ be an isomorphism between
projective varieties $V$ and $V'$.  The cohomology functor induces 
an isomorphism $\phi : H_{ar}^i(V)\to  H_{ar}^i(V')$ of the corresponding 
Deninger cohomology groups.  Recall that $H_{ar}^i(V)\cong \mathbf{R}[\mathbf{x}^{\pm 1}]$
and since the isomorphism of $V$ is defined over a number field $k$,  one 
gets an isomorphism $\phi:\mathbf{Z}[\mathbf{x}^{\pm 1}]\to \mathbf{Z}[\mathbf{y}^{\pm 1}]$.
The  group isomorphism $\phi$  extends to a ring  isomorphism by choice of a monomial basis in the ring
of the Laurent polynomials, and vice versa.
It remains to notice that the endomorphism $Fr^i_{\infty}: \mathbf{Z}[\mathbf{x}^{\pm 1}]
\to \mathbf{Z}[\mathbf{x}^{\pm 1}]$ commutes with $\phi$ and therefore  
$\phi(Fr^i_{\infty}(\mathbf{Z}[\mathbf{x}^{\pm 1}]))=Fr_{\infty}^i(\mathbf{Z}[\mathbf{y}^{\pm 1}])$.   
In other words, one gets a ring isomorphism $A\cong A'$.  Lemma \ref{lm3.1} is proved.
\end{proof}
\begin{remark}\label{rmkDen}
In  \cite[Section 3]{Den1} the notation $B^i_{ar}$ has been used instead of  $H^i_{ar}(V)$;
the  $B^i_{ar}$ is proved isomorphic  to the ring of Laurent polynomials in  variable $T$. 
The result extends to the multivariable  Laurent polynomials \cite[Section 2.1]{Nik1}. 
\end{remark}

\begin{lemma}\label{lm3.2}
With all the notations above
$C(A)\cong \operatorname{Pic}_c(V).$ 
\end{lemma} 
\begin{proof}
(i) Recall that   $A:=Fr^1_{\infty}(\mathbf{Z}[\mathbf{x}^{\pm 1}])$ is a cluster algebra;
we refer the reader to \cite[Section 2.3]{Nik1} for the definition and proof of this fact.  Such algebras  are  Krull domains
 and, therefore, one can consider the 
corresponding  divisor class group $C(A)$  \cite{ElsLamSme1}.

\bigskip
(ii) Let $\operatorname{Pic} ~(A)$ be the Picard group of the ring $A$, i.e. a subgroup of $C(A)$ generated 
by the invertible ideals of $A$.  Since $A$ is a locally factorial ring, one gets 
a ring isomorphism $\operatorname{Pic} ~(A)\cong C(A)$.

\bigskip
(iii) It is known  that $\operatorname{Pic} ~(A)\cong \operatorname{Pic}~(\operatorname{Spec}~A)$, where $\operatorname{Spec}~A\cong V$. 
On the other hand, we recall that the ring $A:=Fr^1_{\infty}(\mathbf{Z}[\mathbf{x}^{\pm 1}])$ 
comes  from the
action of Frobenius endomorphism $Fr^1_{\infty}$ on the Deninger cohomology
group $H^1_{ar}(V)$ of variety $V$. 
Since  the local factor at infinity is given 
by  the  characteristic polynomial of the linear action of  $Fr^1_{\infty}$ 
on  $H^1_{ar}(V)$,  we conclude that the Picard group of the ring $A$
encodes the data of the arithmetic scheme $V\to\operatorname{Spec}~A$ at the prime $p=\infty$. 
In other words, one gets an isomorphism:
\begin{equation}\label{eq3.0}
\operatorname{Pic}~(\operatorname{Spec}~A)\cong \operatorname{Pic}_c(V).
\end{equation}

\bigskip
(iv)  One gets following isomorphisms of the abelian groups:
\begin{equation}\label{concl}
 C(A)\cong \operatorname{Pic} ~(A)\cong \operatorname{Pic}~(\operatorname{Spec}~A)\cong \operatorname{Pic}_c(V).
 \end{equation}
 
 It follows from (\ref{concl}) that   $C(A)\cong \operatorname{Pic}_c(V).$
Lemma \ref{lm3.2} is proved.
\end{proof}

\begin{lemma}\label{lm3.3}
$K_0(\mathcal{O}_{A^1_{\infty}})\cong \operatorname{Pic}_c(V).$ 
\end{lemma}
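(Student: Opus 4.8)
The plan is to assemble Lemma \ref{lm3.3} from the two preceding lemmas together with Theorem \ref{thm2.2}, so the work is essentially a chain of isomorphisms whose only genuine content lies in identifying the matrix governing the $K$-theory with the transpose of the Frobenius $Fr^1_{\infty}$. First I would invoke Theorem \ref{thm2.2} with $A_{\infty}=A^1_{\infty}$: since the matrix is row-finite and irreducible, the exact sequence (\ref{eq2.5}) yields
\begin{equation}\label{eq3.1}
K_0(\mathcal{O}_{A^1_{\infty}})\cong \mathbf{Z}^{\infty}/(1-(A^1_{\infty})^t)\mathbf{Z}^{\infty}.
\end{equation}
The point is that $A^1_{\infty}$ is precisely the infinite integer matrix of the endomorphism $Fr^1_{\infty}$ acting on $H^1_{ar}(V)\cong\mathbf{Z}[\mathbf{x}^{\pm 1}]$, as set up in part (i) of Lemma \ref{lm3.1}; hence $A^1_{\infty}=Fr^1_{\infty}$ as matrices and the right-hand side of (\ref{eq3.1}) is $\mathbf{Z}^{\infty}/(1-(Fr^1_{\infty})^t)\mathbf{Z}^{\infty}$.

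Next I would bring in the divisor class group of the coordinate ring $A=Fr^1_{\infty}(\mathbf{Z}[\mathbf{x}^{\pm 1}])$. The strategy is to cite the stated formula
\begin{equation}\label{eq3.2}
C(A)\cong \mathbf{Z}^{\infty}/(1-(Fr^1_{\infty})^t)\mathbf{Z}^{\infty},
\end{equation}
which expresses the Krull divisor class group as the cokernel of $1-(Fr^1_{\infty})^t$ on the free abelian group generated by the (countably many) height-one primes. Combining (\ref{eq3.1}) and (\ref{eq3.2}) gives $K_0(\mathcal{O}_{A^1_{\infty}})\cong C(A)$ directly, with no further computation. The role of Lemma \ref{lm3.1} here is to guarantee that $A$ is genuinely a coordinate ring of $V$, so that $C(A)$ is an invariant attached to $V$ rather than to an arbitrary presentation.

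Finally I would close the loop with Lemma \ref{lm3.2}, which already supplies $C(A)\cong Pic_c(V)$. Chaining the three isomorphisms yields
\begin{equation}\label{eq3.3}
K_0(\mathcal{O}_{A^1_{\infty}})\cong C(A)\cong Pic_c(V),
\end{equation}
which is the assertion of Lemma \ref{lm3.3} (and hence of Theorem \ref{thm1.1}). The routine verifications — row-finiteness and irreducibility of $A^1_{\infty}$, and the well-definedness of the cokernel presentation — are inherited from Theorem \ref{thm2.3} and the Krull-domain structure invoked in Lemma \ref{lm3.2}.

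I expect the main obstacle to be justifying the identification $A^1_{\infty}=Fr^1_{\infty}$ as the \emph{same} matrix entering both (\ref{eq3.1}) and (\ref{eq3.2}): Theorem \ref{thm2.2} describes $K_0$ in terms of the abstract transpose $A_{\infty}^t$, while formula (\ref{eq3.2}) is a statement about the divisor class group of a Krull domain whose height-one primes must be matched, basis-by-basis, with the standard generators of $\mathbf{Z}^{\infty}$ used in Theorem \ref{thm2.2}. Making these two $\mathbf{Z}^{\infty}$'s and the two copies of $1-(Fr^1_{\infty})^t$ literally coincide — rather than merely abstractly isomorphic up to an unspecified change of basis — is the delicate step, and it relies essentially on the cluster-algebra structure of $A$ (via \cite{ElsLamSme1}) to pin down the divisorial generators canonically.
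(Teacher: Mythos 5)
Your proposal is correct in outline and follows essentially the same route as the paper: apply Theorem \ref{thm2.2} with $A_{\infty}=Fr^1_{\infty}$ to get $K_0(\mathcal{O}_{A^1_{\infty}})\cong \mathbf{Z}^{\infty}/(1-(Fr^1_{\infty})^t)\mathbf{Z}^{\infty}$, identify $C(A)$ with the same cokernel, and conclude via Lemma \ref{lm3.2}. The only difference is that where you merely cite the class-group formula, the paper derives it by a lattice argument --- viewing $L\cong\mathbf{Z}^{\infty}$ as the additive group of $\mathbf{Z}[\mathbf{x}^{\pm 1}]$, setting $L'=Fr^1_{\infty}(L)$, arguing that the principal ideals are $F(A)=\{xL \mid x\in L'\}$, and hence $C(A)=D(A)/F(A)\cong L/L'$ --- which is precisely the ``delicate step'' you flagged but deferred to the cluster-algebra literature rather than carrying out.
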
 
\begin{proof}
(i) Let us calculate $C(A)$ in terms of the Frobenius endomorphisms 
$Fr_{\infty}^i: \mathbf{Z}[\mathbf{x}^{\pm 1}]\to \mathbf{Z}[\mathbf{x}^{\pm 1}]$. 
Notice  that ideals of $A$ of height $i$ correspond to Frobenius 
endomorphisms  $Fr_{\infty}^i$ for all  $0\le i\le 2n$. 
Since $C(A)$ is generated by the classes of the prime ideals of height $1$,
we conclude that $C(A)$ depends only on the endomorphism 
$Fr_{\infty}^1: \mathbf{Z}[\mathbf{x}^{\pm 1}]\to \mathbf{Z}[\mathbf{x}^{\pm 1}]$.

\bigskip
(ii) Consider the additive group $L\cong\mathbf{Z}^{\infty}$ of the ring $\mathbf{Z}[\mathbf{x}^{\pm 1}]$. 
The endomorphism $Fr_{\infty}^1$ defines a sub-lattice $L'$ of the fixed points of $L$
whose cosets form a group:
\begin{equation}\label{eq3.1}
 \frac{\mathbf{Z}^{\infty}}{(1-(Fr_{\infty}^1))^t \mathbf{Z}^{\infty}}
\end{equation}

\bigskip
(iii) On the other hand,  $C(A)$ is the quotient of divisorial ideals $D(A)$
by the principal ideals $F(A)$  of the ring $A$. 
It is not hard to see,  that each principal ideal of $A$  can be written as $xL$ for some $x\in L'$.
In other words, $F(A)=\{xL ~|~ x\in L'\}$. Thus  
\begin{equation}\label{eq3.2}
C(A)=D(A)/F(A)\cong L/L'\cong \frac{\mathbf{Z}^{\infty}}{(1-(Fr_{\infty}^1)^t) \mathbf{Z}^{\infty}}.
\end{equation}

\bigskip
(iv)  Likewise, we set $A_{\infty}=Fr^1_{\infty}$ in Theorem \ref{thm2.2} and we get
\begin{equation}\label{eq3.3}
K_0(\mathcal{O}_{A^1_{\infty}}) \cong \frac{\mathbf{Z}^{\infty}}{(1-(Fr_{\infty}^1)^t) \mathbf{Z}^{\infty}}.
\end{equation}

\bigskip
(v) Comparing (\ref{eq3.2}) and (\ref{eq3.3}) we conclude that $K_0(\mathcal{O}_{A^1_{\infty}}) \cong C(A)$. 
But Lemma \ref{lm3.2} says that $C(A)\cong \operatorname{Pic}_c(V)$. Therefore   $K_0(\mathcal{O}_{A^1_{\infty}}) \cong \operatorname{Pic}_c(V)$.
Lemma \ref{lm3.3} is proved. 
\end{proof}

\bigskip
Theorem \ref{thm1.1} follows from Lemma \ref{lm3.3}.

\section{Remarks}
Recall that the $K$-theory of a Cuntz-Krieger algebra  $\mathcal{O}_{A}$  is given 
by the formulas:
\begin{equation}\label{eq4.1}
\left\{
\begin{array}{ccc}
K_0(\mathcal{O}_{A}) &\cong&  \mathbf{Z}^n / (1-A^t) \mathbf{Z}^n,\\
&&\\
K_1(\mathcal{O}_{A}) &\cong& Ker ~(1-A^t).
\end{array}
\right.
\end{equation}
It is immediate  that $|K_0(\mathcal{O}_{A})|<\infty$ if and only if
$\det ~(1-A^t)\ne 0$.  Whenever $\det ~(1-A^t)=0$ both  $K_0(\mathcal{O}_{A})$ and $K_1(\mathcal{O}_{A})$ are 
infinite groups  \cite[Exercise 10.11.9 (c)]{B}.  
On the other hand, it is known that $|K_0(\mathcal{O}_{A})|=|V(\mathbf{F}_p)|$ for a matrix $A\in\operatorname{GL}_n(\mathbf{Z}),$
see e.g. \cite[Section 1]{Nik1}.

It is useful to think of  $K_0(\mathcal{O}_{A_{\infty}})$ as the inductive limit of finite abelian groups $K_0(\mathcal{O}_{A_i})$,
see definition of the AF-core  $\mathscr{F}$ in Section 2.2  and formula (\ref{eq2.4}).  
In other words, 
\begin{equation}\label{eq4.2}
\lim_{i\to\infty} K_0(\mathcal{O}_{A_i}) \cong  K_0(\mathcal{O}_{A_{\infty}}).
\end{equation}
However, if $\det~(1-A_m)=0$ for some $m$, one gets $|K_0(\mathcal{O}_{A_m})|=\infty$. 
In this case the convergent sequence (\ref{eq4.2}) must terminate
after a finite number of steps. Given that finiteness of the $K_0$-groups is related 
to such of the points of variety $V$, we have the following conjecture.
\begin{conjecture}\label{cnj4.1}
If $V(k)$ is a projective variety over the number field $k$, then
\begin{equation}\label{eq4.3}
|V(k)|=\begin{cases}
\hbox{finite}, &
\hbox{if} \quad \det ~(1-A^1_{\infty})=0,\cr 
\hbox{infinite}, & \hbox{otherwise.}
\end{cases}
\end{equation}
\end{conjecture}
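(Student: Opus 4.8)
The plan is to reduce the dichotomy (\ref{eq4.3}) to a finiteness criterion for the inductive limit (\ref{eq4.2}), and then to read that criterion off from the vanishing of $\det(1-A^1_\infty)$. By Theorem \ref{thm1.1} the group $K_0(\mathcal{O}_{A^1_\infty})\cong Pic_c(V)$ controls the Arakelov-compactified geometry of $V$, while the exact sequence (\ref{eq2.5}) of Theorem \ref{thm2.2}, applied with $A_\infty=Fr^1_\infty$, shows that the operator $1-(Fr^1_\infty)^t$ is injective exactly when $K_1(\mathcal{O}_{A^1_\infty})\cong Ker~(1-(Fr^1_\infty)^t)$ vanishes. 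Since the characteristic polynomial of $A^1_\infty$ is well defined by row-finiteness [Pask \& Raeburn 1996] \cite[Note 2.1.11]{PasRae1}, this injectivity is equivalent to $\det(1-A^1_\infty)\ne 0$. Thus the first half of the argument is to pin the condition $\det(1-A^1_\infty)=0$ to the presence of a nontrivial kernel, i.e. of an eigenvalue $1$ of $Fr^1_\infty$.

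Next I would exploit the inductive-limit description of $K_0(\mathcal{O}_{A^1_\infty})$. Writing $A^1_\infty$ as a direct limit of its finite truncations $A_i$, each $K_0(\mathcal{O}_{A_i})$ is finite as long as $\det(1-A_i)\ne 0$, and (\ref{eq4.2}) presents $K_0(\mathcal{O}_{A^1_\infty})$ as $\varinjlim K_0(\mathcal{O}_{A_i})$. The vanishing $\det(1-A^1_\infty)=0$ would be shown to force $\det(1-A_k)=0$ at some finite stage $k$; there $|K_0(\mathcal{O}_{A_k})|=\infty$ and, as noted after (\ref{eq4.2}), the sequence terminates after finitely many steps. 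Conversely $\det(1-A^1_\infty)\ne 0$ keeps every truncation nondegenerate, so the limit does not stabilize and the associated point counts grow without bound.

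The remaining step is the bridge to $|V(k)|$. Here I would extend the finite-field identity $|K_0(\mathcal{O}_A)|=|V(\mathbf{F}_p)|$ recalled above \cite[Section 1]{Nik1} to the archimedean prime by means of an arithmetic Lefschetz formula, expressing $|V(k)|$ through a regularized trace of $Fr^1_\infty$ on the Deninger cohomology $H^1_{ar}(V)\cong\mathbf{Z}[\mathbf{x}^{\pm 1}]$ [Deninger 1991] \cite[Theorem 4.1]{Den1}. Finiteness of $V(k)$ then corresponds to the stabilization (termination) of the limit (\ref{eq4.2}), i.e. to the case $\det(1-A^1_\infty)=0$, while non-termination yields $|V(k)|=\infty$; this is precisely (\ref{eq4.3}).

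The hard part will be this last bridge. There is no naive equality $|V(k)|=|Pic_c(V)|$ at the infinite prime: the cohomology $H^1_{ar}(V)$ is infinite-dimensional, so $Fr^1_\infty$ carries no ordinary trace and one must construct a zeta-regularized trace and prove that it genuinely counts $k$-rational points in the Arakelov sense. Moreover, one must reconcile the direction of the correspondence — that a kernel of $1-Fr^1_\infty$ (the eigenvalue-$1$ locus) goes together with \emph{finiteness} of $V(k)$ — with the classical arithmetic of $L$-values and the Faltings finiteness theorem for curves of genus $g\ge 2$. Establishing that the termination of (\ref{eq4.2}) is equivalent to $|V(k)|<\infty$, rather than merely implied by it, is the crux on which the whole proof rests.
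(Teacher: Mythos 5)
The statement you set out to prove is Conjecture \ref{cnj4.1}: the paper itself contains no proof of it, only the heuristic motivation of Section 4 (the dichotomy (\ref{eq4.1}), the finite-prime identity $|K_0(\mathcal{O}_A)|=|V(\mathbf{F}_p)|$, and the remark that the limit (\ref{eq4.2}) ``must terminate'' when some $\det(1-A_k)=0$). Your proposal retraces exactly that motivation --- truncations $A_i$, finiteness of $K_0(\mathcal{O}_{A_i})$ tied to $\det(1-A_i)\ne 0$, termination of (\ref{eq4.2}) --- and then, as you yourself concede in your last paragraph, leaves the decisive step (the equivalence between termination of (\ref{eq4.2}) and $|V(k)|<\infty$) as an unconstructed ``bridge''. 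That bridge is not a technical loose end; it is the entire content of the conjecture. What you have written is therefore a restatement of why the statement is plausible, matching the paper's own motivation, not a proof of it.

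Beyond this, two intermediate claims in your sketch would fail even as steps of a plan. First, the identity $|K_0(\mathcal{O}_A)|=|V(\mathbf{F}_p)|$ lives at a finite prime: it counts points over a finite field using a finite integer matrix. Neither this paper nor \cite{Nik1} provides an analogue counting $k$-rational points, and the ``zeta-regularized trace'' you invoke to manufacture one is not constructed anywhere; Deninger's formalism \cite{Den1} yields $\Gamma$-factors, not point counts, so finiteness of $V(k)$ cannot simply be read off from it. Second, your claim that $\det(1-A^1_\infty)=0$ forces $\det(1-A_k)=0$ at some finite truncation $k$ is unjustified: for a row-finite infinite matrix the ``determinant'' only makes sense through the characteristic polynomial of \cite[Note 2.1.11]{PasRae1}, and vanishing of that limit object imposes no condition on any individual finite truncation (nor conversely). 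Similarly, injectivity of $1-(Fr^1_\infty)^t$ on $\mathbf{Z}^\infty$ is not equivalent to nonvanishing of such a determinant in infinite dimensions, so your opening reduction already conflates two inequivalent conditions. The honest conclusion is that the statement remains a conjecture, and the missing ingredient is precisely an arithmetic Lefschetz-type formula at the infinite prime relating $Fr^1_\infty$ to $V(k)$.
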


\section*{Data availability}
  
  Data sharing not applicable to this article as no datasets were generated or analyzed during the current study.
   
\section*{Conflict of interest}
On behalf of all co-authors, the corresponding author states that there is no conflict of interest.
  

\section*{Funding declaration}
The author was partly supported by the NSF-CBMS grant 2430454.

\begin{acknowledgement}
  The author would like to thank the anonymous referee who provided thoughtful  comments on an earlier version of the manuscript.
\end{acknowledgement}


\end{document}